\newcommand\hrefdefaultfont{\ttfamily}
\xpatchcmd\href{\setkeys{href}{#1}}{\setkeys{href}{font=\hrefdefaultfont,#1}}{}{\fail}
\renewcommand*{\backref}[1]{}
\renewcommand*{\backrefalt}[4]{
  \ifcase #1 
  [No citations.]
  \or [#2]
  \else [#2]
  \fi }
\let\originalleft\left
\let\originalright\right
\renewcommand{\left}{\mathopen{}\mathclose\bgroup\originalleft}
\renewcommand{\right}{\aftergroup\egroup\originalright}
\newcommand{\thsup}{{\rm th}}
\newcommand{\calB}{\mathcal{B}}
\newcommand{\MM}{\mathbb{M}}
\newcommand{\cross}{\times}
\newcommand{\bdy}{\partial} 
\theoremstyle{plain}
\newtheorem{XXXtheoremQED}[equation]{Theorem} 
  {\pushQED{\qed}\begin{XXXtheoremQED}}
  {\popQED\end{XXXtheoremQED}}
\newcommand{\fakeenv}{} 
 \renewcommand{\fakeenv}{#2} 
 \theoremstyle{plain} 
 \newtheorem*{\fakeenv}{#1~\ref{#2}} 
\newenvironment{restated}[2]  
{ 
 \renewcommand{\fakeenv}{#2} 
 \theoremstyle{definition} 
 \newtheorem*{\fakeenv}{#1~\ref{#2}} 
 \begin{\fakeenv}
}
{
 \end{\fakeenv}
}
\begin{document}

\title{Large volume fibred knots of fixed genus}

\author[K.~Baker]{Kenneth L. Baker}
\author[D.~Futer]{David Futer}
\author[J.~Purcell]{Jessica S. Purcell}
\author[S.~Schleimer]{Saul Schleimer}

\dedicatory{For Alan Reid}

\begin{abstract}
We show that, for hyperbolic fibred knots in the three-sphere, the volume and the genus are unrelated.  
Furthermore, for such knots, the volume is unrelated to strong quasipositivity and Seifert form.
\end{abstract}

\thanks{This work is in the public domain.}

\date{2023-08-15}




\maketitle

\setcounter{section}{1}

\begin{wrapfigure}[24]{r}{1.9in}
\vspace{-5pt}
\centering
\labellist
\small\hair 2pt
\pinlabel {$\Pi$} [l] at 172 453
\pinlabel {$\Phi^3$} [l] at 172 317
\pinlabel {$\sigma_1^{-1}$} [l] at 172 210
\pinlabel {$\Phi^{-3}$} [l] at 172 104
\pinlabel {\footnotesize$\gamma_n$} [bl] at 22 320
\pinlabel {\footnotesize$\delta_n$} [bl] at 22 106
\pinlabel {\footnotesize$\omega_n$} [bl] at -17 21
\endlabellist
\vspace{-0.5cm}
\includegraphics[width = 0.27\textwidth]{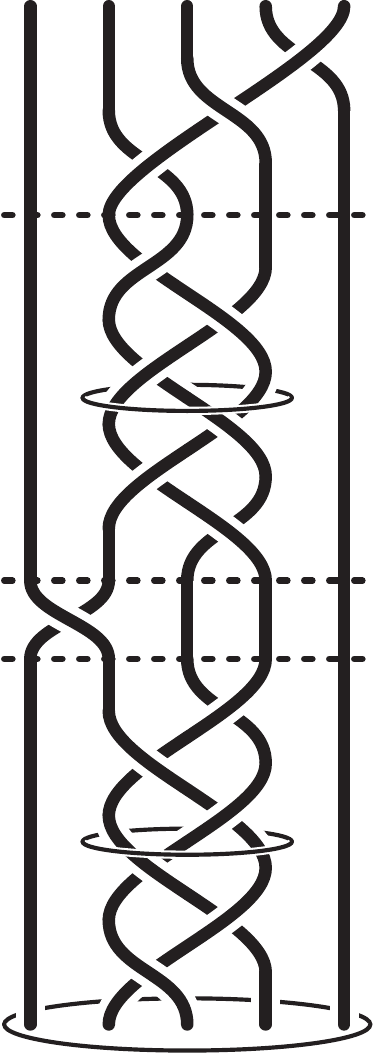}
\caption{}
\label{Fig:Braid}
\end{wrapfigure}

In this note we prove the following.

\begin{theorem}
\label{Thm:BigKnots}
For every $g > 1$ and every $V > 0$ there is a knot $w \subset S^3$ with the following properties. 
\begin{enumerate}
\item
\label{Itm:Knot}
$S^3 - w$ is fibred over the circle, with fibre of genus $g$. 
\item
\label{Itm:Hyp}
$S^3 - w$ is hyperbolic, with volume at least $V$.
\item
\label{Itm:Long}
Longitudinal surgery on $w$ is hyperbolic, with volume at least $V$.
\end{enumerate}
\end{theorem}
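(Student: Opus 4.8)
\emph{Overview.} The plan is to produce, for each $g>1$, an explicit infinite family of fibred knots $w_n\subset S^3$ whose fibre is the \emph{fixed} once-punctured genus-$g$ surface $S$ and whose monodromies $\phi_n\in\Mod(S)$ are pseudo-Anosov with Weil--Petersson translation length growing linearly in $n$; given $V$ one then takes $w=w_n$ for $n$ large. Reading the figure, the monodromy is $\phi_n=\Pi\,\Phi^{n}\sigma_1^{-1}\Phi^{-n}$, so that $\phi_n$ differs from a fixed base map $\Pi$ by a Dehn twist about the moving curve $c_n=\Phi^{n}(c_0)$, where $c_0$ is the core of $\sigma_1$ and $\gamma_n,\delta_n,\omega_n$ are its companions on the fibre. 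For property~\eqref{Itm:Knot}, I would build $w_n$ from a fixed fibred genus-$g$ knot by a Stallings twist -- surgery on an unknotted, suitably framed curve carried by the fibre -- performed along $c_n$. Such a twist preserves the ambient $S^3$, the fibration, and the fibre genus, so every $w_n$ is a fibred genus-$g$ knot in $S^3$ with monodromy $\phi_n$. The one thing to check is that $\Phi$ is chosen (this is the role of $\Phi$ and the prefix $\Pi$) so that each $c_n$ remains an admissible twisting curve, keeping us in $S^3$.

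\emph{Hyperbolicity, parts~\eqref{Itm:Hyp} and~\eqref{Itm:Long}.} Next I would show $\phi_n$ is pseudo-Anosov, the natural tool being the Thurston--Penner criterion: the curves in the figure fill $S$, and $\phi_n$ is a product of opposite-signed twists about them. Thurston's hyperbolization of fibred $3$--manifolds then makes $S^3-w_n$ hyperbolic, giving the first half of~\eqref{Itm:Hyp}. For~\eqref{Itm:Long}, longitudinal surgery is $0$--surgery, which caps $\partial S$ and yields the mapping torus of the extension $\hat\phi_n$ of $\phi_n$ to the closed surface $\Sigma_g$; one verifies that capping the puncture keeps the map pseudo-Anosov -- ruling out a periodic or reducible capped foliation is the subtlety here -- and Thurston again gives a hyperbolic $0$--surgery.

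\emph{Volume (the crux).} I would bound volumes below using Brock's theorem that the volume of a hyperbolic surface bundle is comparable to the Weil--Petersson translation length of its monodromy, so that it suffices to prove $\ell_{\mathrm{WP}}(\phi_n)\to\infty$ and $\ell_{\mathrm{WP}}(\hat\phi_n)\to\infty$. This is exactly what the conjugation pattern is designed to deliver: since $\Phi$ pushes $c_0$ away in the curve complex, the twisting curve $c_n=\Phi^n(c_0)$ lies at distance growing linearly in $n$, so by the Masur--Minsky distance formula the large subsurface projection produced by the twist about $c_n$ cannot be absorbed, forcing translation length -- hence volume -- to grow at least linearly in $n$. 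Choosing $n$ so that $\tfrac1K\ell_{\mathrm{WP}}(\phi_n)-C\ge V$ for both the knot complement and its $0$--filling finishes~\eqref{Itm:Hyp} and~\eqref{Itm:Long}. As a more self-contained alternative, one could instead exhibit a growing disjoint family of essential subsurfaces in $S^3-w_n$ (or an essential surface of growing guts complexity) and invoke the Agol--Storm--Thurston estimate.

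\emph{Main obstacle.} The genuine difficulty is coupling the constraints: the geometry must blow up while the fibre genus, and membership in the class of knots in $S^3$, stay fixed. Extra complexity ordinarily forces extra genus, and the whole point of the word $\Phi^n\sigma_1^{-1}\Phi^{-n}$ is to inject curve-complex distance -- and therefore volume -- through a single moving twist that neither changes the fibre nor leaves $S^3$. Verifying that this distance genuinely survives -- that the prefix $\Pi$, and the capping in part~\eqref{Itm:Long}, do not collapse the progress made by $\Phi^n$ -- together with the pseudo-Anosov check for the capped monodromy $\hat\phi_n$, is the technically hardest part of the argument.
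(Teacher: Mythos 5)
There is a genuine gap on both halves of your argument. For property \refitm{Knot}, you replace the paper's construction with Stallings twists along the curves $c_n = \Phi^n(c_0)$, but a Stallings twist returns a fibred knot in $S^3$ only when the twisting curve is unknotted in $S^3$ and has zero framing from the fibre surface, and this must be verified for \emph{every} $n$. You flag this yourself (``the one thing to check''), but your proposal contains no mechanism for it: $\Phi$ is an automorphism of the fibre, not an ambient isotopy, so the knot type and framing of $c_n$ in $S^3$ can change with $n$, and a priori the twisted manifold need not be $S^3$ at all. The paper sidesteps this entirely by working downstairs: it shows the braid closure $\hat\beta_n$ is unknotted for all $n$ at once, because destabilising along the first strand smooths the $\sigma_1^{-1}$ crossing and makes $\Phi^n$ and $\Phi^{-n}$ cancel (\refclm{Unknot}); then $w_n$ is produced as the preimage of the braid axis in the double cover of $S^3$ branched over $\hat\beta_n$, with fibre $S_{g,1}$ and monodromy the Birman--Hilden lift $b_n$. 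Without an analogue of that cancellation argument, property \refitm{Knot} is unproven in your approach.

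For properties \refitm{Hyp} and \refitm{Long}, two steps fail as stated. First, the Thurston--Penner criterion does not apply to $\phi_n$ for $n > 0$: it requires positive twists about one multicurve and negative twists about a second, jointly filling multicurve, whereas $c_n = \Phi^n(c_0)$ intersects the other negative-twist curves, violating the disjointness hypotheses. The paper uses the Thurston--Veech criterion only for $n = 0$ (\refclm{HypZero}) and obtains hyperbolicity only for \emph{large} $n$, via gluing machinery. Second, and more seriously, your volume mechanism misattributes the source of growth: Brock's theorem compares volume with Weil--Petersson translation length, and the relevant (pants-graph) distance formula \emph{omits annular subsurfaces}. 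A twist about a curve that is merely far away in the curve complex produces a large annular projection only, which forces no volume growth---indeed high powers of a fixed Dehn twist yield mapping tori of uniformly bounded volume, since they are Dehn fillings of a single drilled manifold. What actually drives the growth here is the large projection to the \emph{non-annular} supporting subsurfaces of the conjugates of $\Phi^{\pm n}$ (upstairs, the once-punctured tori $C_n$ and $D_n$), which is exactly what Clay--Leininger--Mangahas supply; with that correction your Brock plus Masur--Minsky route becomes the alternative proof the paper itself sketches in \refrem{CLMProof}. The paper's main argument is different again: it drills $c_n \cup d_n$, cuts along $C$ and $D$ to form a pared manifold reglued by $F^n$, applies Brock--Minsky--Namazi--Souto to get linear volume growth and long meridional slopes, and then recovers $M_n$ (and the longitudinal filling, for \refitm{Long}) by the Futer--Kalfagianni--Purcell Dehn-filling theorem. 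Your treatment of \refitm{Long} is in the right spirit but inherits both gaps above.
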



\begin{remark}
In the opposite direction, the two-bridge knots with continued fraction $[2g - 1, 1, 2]$ are hyperbolic and fibred of genus $g$, with bounded volume as $g \to \infty$.  
\end{remark}

\refthm{BigKnots} answers a question posed by Reid.
Our proof also provides a family of fibred hyperbolic knots in $S^2 \times S^1$, of fixed genus, whose double branched covers have unbounded volume.
This answers a special case of a question posed by Hirose, Kalfagianni, and Kin~\cite[Question~4]{HiroseKalfagianniKin22}. 
For more details, see \refrem{Other}.

\begin{proof}[Proof of \refthm{BigKnots}]
Fix $g > 1$.  
Let $\calB_{2g + 1}$ be the braid group on $2g+1$ strands. 
Let $\sigma_i$ be the positive half-twist between the $i^\thsup$ and $(i+1)^\thsup$ strands.
We define the following braids.
\begin{align*}
\Pi     &= \sigma_{2g} \cdot \sigma_{2g - 1}^{-1} \cdot \sigma_{2g - 2} \cdot \sigma_{2g - 3}^{-1} 
           \cdots \sigma_4 \cdot \sigma_3^{-1} \cdot \sigma_{2} \\ 
\Phi    &= \sigma_{2} \cdot \sigma_{3}^{-1} \\ 
\beta_n &= \Pi \cdot \Phi^n \cdot \sigma_1^{-1} \cdot \Phi^{-n} 
\end{align*}
\noindent
See \reffig{Braid}, where we take $g = 2$ and $n = 3$. 

Let $\hat{\beta}_n$ be the braid closure of $\beta_n$, taken in $S^3$. 
Let $\omega_n$ be its augmenting braid axis.
Let $\Lambda_n = \left( \hat{\beta}_n \cup \omega_n \right)$ be the resulting two-component link.
Note that $\omega_n$ bounds a disk $\Omega_n$ in $S^3$ meeting $\hat{\beta}_n$ in $2g + 1$ points.
This is shown at the very bottom of \reffig{Braid}.
We deduce that $S^3 - \Lambda_n$ is a punctured disk bundle over the circle, with monodromy $\beta_n$. 

\begin{claim}
\label{Clm:Unknot}
Both $\omega_n$ and $\hat{\beta}_n$ are unknots in $S^3$.
\end{claim}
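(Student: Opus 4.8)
The plan is to treat the two components separately, and for $\hat{\beta}_n$ to use only two elementary invariances of the braid closure: invariance under conjugation, and Markov (de)stabilization. The unknottedness of $\omega_n$ is immediate and requires no computation: by construction $\omega_n$ bounds the disk $\Omega_n$ in $S^3$, so it is unknotted. All the work is in showing that $\hat{\beta}_n$ is unknotted.

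The guiding observation is that although $\beta_n$ depends on $n$, this dependence is carried entirely by a conjugation that the closure cannot detect. Since $\Pi$ and $\Phi$ involve only the generators $\sigma_2, \dots, \sigma_{2g}$, the generator $\sigma_1$ occurs exactly once in $\beta_n$, inside the syllable $\sigma_1^{-1}$. First I would conjugate $\beta_n$ by $\Phi^{-n}$; this does not change the closure, and it cancels the trailing $\Phi^{-n}$, producing the word $\Phi^{-n}\, \Pi\, \Phi^n\, \sigma_1^{-1}$. Writing $W = \Phi^{-n}\Pi\Phi^n$, which lies in the subgroup generated by $\sigma_2, \dots, \sigma_{2g}$, we see that $\hat{\beta}_n$ is the closure of a word $W \sigma_1^{-1}$ in which the first strand meets the remaining braid only through the single crossing $\sigma_1^{-1}$. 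A Markov destabilization of this first strand then drops us into $\calB_{2g}$, replacing $W$ by the word $W'$ obtained by shifting every index down by one, so that $W' = (\Phi')^{-n}\, \Pi'\, (\Phi')^n$, where $\Phi' = \sigma_1 \sigma_2^{-1}$ and $\Pi' = \sigma_{2g-1} \sigma_{2g-2}^{-1} \cdots \sigma_2^{-1} \sigma_1$.

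At this stage the $n$-dependence evaporates: the word $W'$ is literally a conjugate of $\Pi'$, so by conjugation-invariance of the closure we get $\hat{\beta}_n = \hat{W'} = \hat{\Pi'}$, now independent of $n$. Finally, in $\Pi'$ each of $\sigma_1, \dots, \sigma_{2g-1}$ appears exactly once, so one peels off the strands one at a time by repeated destabilization, starting from the top generator $\sigma_{2g-1}$ and working downward, reducing $\Pi'$ to the trivial one-strand braid. Hence $\hat{\Pi'}$, and therefore $\hat{\beta}_n$, is the unknot.

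I expect the only genuinely delicate point to be the first move: recognizing that conjugation by $\Phi^{-n}$ is exactly the right operation, since it simultaneously cancels one $\Phi$-block and repositions the lone $\sigma_1^{-1}$ so that the first strand becomes destabilizable, after which conjugation-invariance removes the surviving $\Phi^{\pm n}$ all at once. Everything that follows is routine Markov bookkeeping, which I would carry out carefully but not belabor. It is also worth checking in passing that the underlying permutation of $\beta_n$ is a single $(2g+1)$-cycle, which confirms that $\hat{\beta}_n$ is indeed a knot, so that ``unknot'' is the correct target of the argument.
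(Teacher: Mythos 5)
Your proposal is correct and is essentially the paper's own argument: $\omega_n$ is unknotted because it bounds $\Omega_n$, and $\hat{\beta}_n$ is handled by destabilising along the first strand (where $\sigma_1$ occurs exactly once), after which the $\Phi^{\pm n}$ factors cancel and the remaining closure $\hat{\Pi}$ unknots by iterated destabilisation. Your reordering of the moves---conjugating by $\Phi^{-n}$ before destabilising rather than smoothing the $\sigma_1^{-1}$ crossing directly---is only a cosmetic difference in the Markov bookkeeping.
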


\begin{proof}
Since $\omega_n$ bounds the disk $\Omega_n$, it is an unknot.

Note that $\hat{\beta}_n$ is stabilised along its first strand.  
Destabilising has the effect of smoothing the crossing at $\sigma_{1}^{-1}$ and deleting the first strand.  
The factors $\Phi^n$ and $\Phi^{-n}$ now cancel,
leaving only the ($2g$--strand) braid closure $\hat{\Pi}$.
This is an iterated stabilisation,
proving the claim.
\end{proof}

Let $\gamma_n$ and $\delta_n$ be the augmentations of $\beta_n$, taken before and after the factor $\sigma_{1}^{-1}$, each containing the second, third, and fourth strands.
Again, see \reffig{Braid}.
Thus $\gamma_n$ and $\delta_n$ bound disks $\Gamma_n$ and $\Delta_n$ in $S^3$ each meeting $\hat{\beta}_n$ in three points.

Appealing to \refclm{Unknot}, the branched double cover of $S^3$ along $\hat{\beta}_n$ is again homeomorphic to $S^3$. 
Let $c_n$, $d_n$, and $w_n$ be the preimages of $\gamma_n$, $\delta_n$, and $\omega_n$, respectively; 
let $C_n$, $D_n$, and $W_n$ be the preimages of $\Gamma_n$, $\Delta_n$, and $\Omega_n$, respectively.
Since the disks meet $\hat{\beta}_n$ in an odd number of points, each of $c_n$, $d_n$, and $w_n$ is connected and equals the boundary of the corresponding surface $C_n$, $D_n$, and $W_n$.
An Euler characteristic calculation shows that $C_n$ and $D_n$ are homeomorphic to $S_{1, 1}$:
a torus with one boundary component; see, for example,~\cite[Figure~9.13]{FarbMargalit12}.
Similarly, $W_n$ is homeomorphic to $S_{g, 1}$:
a surface of genus $g$ with one boundary component.
We deduce that the knot complement $M_n = S^3 - w_n$ is an $S_{g, 1}$--bundle over $S^1$.
Thus $w_n$ is a genus $g$ fibred knot in $S^3$. 
This is the family of knots promised in \refthm{BigKnots}\refitm{Knot}.

By a result of Birman and Hilden~\cite[Theorem~9.2]{FarbMargalit12}, the monodromy of $M_n$ is a product of Dehn twists, one for each half-twist generator in $\calB_{2g+1}$. 
To fix notation, let $s_i$ be the Dehn twist in $S_{g,1}$ lifting $\sigma_i$. 
Lifting $\Pi$, $\Phi$, and $\beta_n$ in this way gives the following mapping classes.
\begin{align*}
P   &= s_{2g} \cdot s_{2g - 1}^{-1} \cdot s_{2g - 2} \cdot s_{2g - 3}^{-1} 
       \cdots s_{4} \cdot s_{3}^{-1} \cdot s_{2} \\ 
F   &= s_{2} \cdot s_{3}^{-1} \\ 
b_n &= P \cdot F^n \cdot s_{1}^{-1} \cdot F^{-n} 
\end{align*}
\noindent
Thus $b_n$ is the monodromy of $M_n = S^3 - w_n$.

\begin{claim}
\label{Clm:HypZero}
For all $g$, the knot complement $M_0$ is hyperbolic.  
The same holds for its longitudinal filling. 
\end{claim}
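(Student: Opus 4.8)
The plan is to apply Thurston's hyperbolization theorem for surface bundles: a mapping torus of a surface homeomorphism is hyperbolic precisely when the monodromy is pseudo-Anosov. Setting $n = 0$ collapses the conjugating factors, so the monodromy of $M_0$ is
\[
b_0 = P \cdot s_1^{-1} = s_{2g}\, s_{2g-1}^{-1}\, s_{2g-2}\, s_{2g-3}^{-1} \cdots s_2\, s_1^{-1},
\]
an alternating word of single Dehn twists along the standard chain of curves in $S_{g,1}$. I would first prove that $b_0$ is pseudo-Anosov, which by Thurston yields hyperbolicity of $M_0$. For the longitudinal (zero-framed) filling I would use that zero-surgery on a fibred knot caps each fibre with a disk and extends the monodromy to the closed surface $S_g$; Thurston again reduces hyperbolicity to showing the extended map $\hat b_0$ is pseudo-Anosov.

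To establish the pseudo-Anosov property I would invoke Penner's construction. Write $\alpha_i$ for the simple closed curve carrying $s_i$, so that $\alpha_1, \dots, \alpha_{2g}$ is a chain: $\alpha_i$ and $\alpha_j$ meet once if $|i - j| = 1$ and are disjoint otherwise. Partition the chain into the even-indexed multicurve $A = \{\alpha_2, \alpha_4, \dots, \alpha_{2g}\}$ and the odd-indexed multicurve $B = \{\alpha_1, \alpha_3, \dots, \alpha_{2g-1}\}$; each is a union of pairwise disjoint curves, and together $A \cup B$ fills $S_{g,1}$, since a chain of even length $2g$ has regular neighbourhood equal to $S_{g,1}$. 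In $b_0$ every twist along an $A$-curve is positive and every twist along a $B$-curve is negative, and each curve of $A \cup B$ appears exactly once. Penner's theorem then guarantees that $b_0$ is pseudo-Anosov, whence $M_0$ is hyperbolic.

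For the longitudinal filling I would cap the boundary of $S_{g,1}$ with a disk. The chain $\alpha_1, \dots, \alpha_{2g}$ now fills the closed surface $S_g$, as its complement becomes a single disk, and the same even-positive, odd-negative sign pattern persists. Penner's construction, applied directly on $S_g$, then shows $\hat b_0$ is pseudo-Anosov as well, so the longitudinal filling is hyperbolic.

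The bookkeeping steps -- confirming that the chain fills in both the bordered and closed cases, checking that the even- and odd-indexed subfamilies are genuinely disjoint multicurves, and matching the positive/negative convention of Penner's theorem to our word -- are routine but require care. The one genuinely delicate point is ensuring that capping the boundary does not destroy the pseudo-Anosov structure, which is precisely why I prefer to reapply Penner's construction directly on the closed surface $S_g$ rather than argue that the bordered invariant foliations extend across the capping disk.
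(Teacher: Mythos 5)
Your argument is correct, and its skeleton matches the paper's: reduce both statements to the pseudo-Anosov property of the monodromy via Thurston's hyperbolization theorem for mapping tori, exploit the alternating sign pattern of the twists along the chain $\alpha_1, \ldots, \alpha_{2g}$, and handle the longitudinal filling by capping the boundary and re-running the filling check on $S_g$, where the complement of the chain becomes a disk. The genuine difference is the pseudo-Anosov criterion you invoke. The paper first rewrites $b_0$, up to cyclic conjugation, as a product of two multitwists $\left( s_{2g} \cdot s_{2g-2} \cdots s_2 \right) \left( s_{2g-1} \cdot s_{2g-3} \cdots s_1 \right)^{-1}$ and then applies the Thurston--Veech criterion, whose hypothesis is that the union $\bigcup_i \alpha_i$ is connected and fills (complement a peripheral annulus in $S_{g,1}$, a disk in $S_g$). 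You instead apply Penner's construction directly to the alternating word, with $A$ the even-indexed and $B$ the odd-indexed multicurve. This buys you something real: Penner's theorem permits positive twists on $A$-curves and negative twists on $B$-curves in \emph{arbitrary} order, so you never need the rearrangement into two multitwists --- a step that is mildly delicate in the paper, since adjacent twists in the chain do not commute (it does go through, by cyclically permuting the word and commuting twists on disjoint curves, but it requires checking). What the paper's route buys in exchange is a criterion stated for a single product of two multitwists, backed by the flat-structure picture via a representation into $\PSL(2, \RR)$; either engine suffices here. Two small points to mind in a final write-up: Penner's theorem is usually stated for punctured surfaces, so for $S_{g,1}$ you should either work with the punctured surface or cite a version accommodating boundary; and ``fills'' in Penner's hypothesis means all complementary regions are disks or boundary-parallel annuli, which is exactly what your chain computation provides, and which your decision to reapply Penner on $S_g$ (rather than extend foliations across the capping disk) handles correctly in the closed case --- the same move the paper makes.
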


\begin{proof}
Since $n = 0$, the monodromy of $M_0$ simplifies to
\[
b_0 = s_{2g} \cdot s_{2g - 1}^{-1} \cdot s_{2g - 2} \cdot s_{2g - 3}^{-1} 
      \cdots s_4 \cdot s_{3}^{-1} \cdot s_{2} \cdot s_{1}^{-1}
\]
which is (cyclically) conjugate to 
\[
\left( s_{2g} \cdot s_{2g - 2} \cdots s_4 \cdot s_2 \right) 
\left( s_{2g - 1} \cdot s_{2g - 3} \cdots s_3 \cdot s_{1} \right)^{-1}.
\]
Let $\alpha_i \subset S_{g, 1}$ be the core curve of the Dehn twist $s_i$.
We isotope the curves $\alpha_i$ to intersect minimally. 
This done, $\alpha_i$ and $\alpha_j$ intersect (and then intersect in a single point) if and only if $|i - j| = 1$.
Thus, the union $\bigcup_i \alpha_i$ is connected.
Also, the complement of $\bigcup_i \alpha_i$ is a peripheral annulus. 
We now apply a criterion of
Thurston~\cite[Theorem~7]{thurston:geometry-dynamics} (see also Veech~\cite[pages~578--579]{Veech89}).  
Let $N$ be the matrix with $N_{ij} =  |\alpha_i \cap \alpha_j|$.
Let $\mu$ be the largest real eigenvalue of $NN^t$; 
note that $\mu$ is positive.
Let $A$ be the union of the $\alpha_i$ for $i$ even;
let $B$ be the union of the $\alpha_i$ for $i$ odd.
Let $T_A$ and $T_B$ be the corresponding multi-twists. 
The image of $T_A \cdot T_B^{-1}$ under Thurston's representation has trace $2 + \mu$,  
hence $b_0$ is pseudo-Anosov.
Appealing to Thurston's hyperbolisation theorem for mapping tori~\cite[Theorem~5.6]{thurston:survey}, we find that $M_0$ is hyperbolic.

If we fill, replacing $S_{g, 1}$ by $S_g$, then the complement of $\bigcup_i \alpha_i$ is a disk. 
Thus the same proof shows that the longitudinal filling of $M_0$ is hyperbolic.
This proves the claim.
\end{proof}


\begin{remark}
\label{Rem:TwoBridge}
By an observation of Gabai and Kazez~\cite[Proposition~2]{GabaiKazez}, the knots $w_0$ are two-bridge. 
In genus $g$, the continued fraction for $w_0$ is $[2, 2, \ldots, 2]$ with $2g$ terms.
This gives another proof that $M_0$ is hyperbolic.
\end{remark}

We now prove \refthm{BigKnots}\refitm{Hyp}.

\begin{claim}
\label{Clm:Hyp}
In fixed genus $g$, as $n$ tends to infinity, the knot complements $M_n$ are eventually hyperbolic,
with volumes tending to infinity.
\end{claim}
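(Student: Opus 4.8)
The plan is to extract both conclusions from a single source: the subsurface $C_n \cong S_{1, 1}$ carrying the pseudo-Anosov map $F$. First I record that $F = s_2 \cdot s_3^{-1}$ is supported on $C_n$ and, by the once-punctured-torus computation already used in \refclm{HypZero} (the twist matrix has trace $3$), is pseudo-Anosov there; write $\lambda^+$ for its attracting lamination and $\tau > 0$ for its translation length on the curve complex $\calC(C_n)$. The core curve $\alpha_1$ of $s_1$ meets $\alpha_2$, hence crosses $\bdy C_n = c_n$; this single fact drives everything. Rewriting the monodromy in the conjugated form
\[
b_n = P \cdot \left( F^n \cdot s_1^{-1} \cdot F^{-n} \right) = P \cdot T_{\delta_n}^{-1}, \qquad \delta_n := F^n(\alpha_1),
\]
exhibits $b_n$ as the fixed mapping class $P$ followed by a single negative Dehn twist along the curve $\delta_n$.

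The heart of the argument is a projection estimate. Since $F$ is supported on $C_n$ and is pseudo-Anosov there, the projection of $\delta_n$ satisfies $\pi_{C_n}(\delta_n) = F^n\!\left(\pi_{C_n}(\alpha_1)\right)$, which lies at distance roughly $n\tau$ from $\pi_{C_n}(\alpha_1)$ and accumulates on $\lambda^+$. Because $\alpha_1$ crosses $c_n$, the curves $\alpha_1$ and $\delta_n$ intersect essentially inside $C_n$, with $i_{C_n}(\alpha_1, \delta_n) \to \infty$; consequently a single twist already makes the $C_n$--arcs of $T_{\delta_n}^{-1}(\alpha_1)$ run parallel to those of $\delta_n$, so $\pi_{C_n}\!\left(T_{\delta_n}^{-1}\alpha_1\right)$ lies within $O(1)$ of $\pi_{C_n}(\delta_n)$. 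As $P$ distorts the $C_n$--projection by a bounded amount, I obtain $d_{C_n}\!\left(\alpha_1, \, b_n(\alpha_1)\right) \geq n\tau - O(1)$. This is exactly where the apparent cancellation between $F^n$ and $F^{-n}$ is defeated: it is the interposed twist $s_1^{-1}$, whose axis crosses $c_n$, that prevents $\delta_n$ from collapsing back onto $\alpha_1$.

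Next I upgrade this single--step estimate to a translation length bound. Once $b_n$ is pseudo-Anosov (see below) it fixes no multicurve, so the subsurfaces $b_n^j(C_n)$ are pairwise distinct; by equivariance each contributes $d_{b_n^j C_n}\!\left(b_n^j\alpha_1, b_n^{j+1}\alpha_1\right) \geq n\tau - O(1)$, and a Behrstock-type no-backtracking argument shows these contributions are all witnessed between $\alpha_1$ and $b_n^k(\alpha_1)$. The Masur--Minsky distance formula for the pants graph then yields $d_{\calP(S_{g, 1})}\!\left(\alpha_1, b_n^k\alpha_1\right) \geq c\,k\,n - C$, so the pants-graph translation length grows at least linearly, $\ell_{\calP}(b_n) \geq c\,n$. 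Brock's theorem, that the volume of a hyperbolic mapping torus is comparable to the pants-graph translation length of its monodromy with constants depending only on the fixed surface $S_{g, 1}$, now gives $\vol(M_n) \geq c'\,n \to \infty$, which is \refthm{BigKnots}\refitm{Hyp}. Capping the puncture replaces $S_{g, 1}$ by $S_g$ but leaves $C_n$ a one-holed torus on which $F$ is still pseudo-Anosov, so the identical estimate bounds below the volume of the longitudinal filling, supplying what is needed for \refthm{BigKnots}\refitm{Long}.

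For the hyperbolicity itself, I must verify that $b_n = P\,T_{\delta_n}^{-1}$ is pseudo-Anosov for all large $n$; then the hyperbolisation theorem for mapping tori, as invoked in \refclm{HypZero}, finishes. Here I use that $\delta_n \to \lambda^+$ in $\PML$ as $n \to \infty$: by the standard high-twisting/limiting principle, a fixed map post-composed with a twist along a curve converging to a lamination filling its support admits no invariant multicurve once $n$ is large; equivalently, were infinitely many $b_n$ reducible or periodic, a subsequential limit would force $F$ to fix a curve in $C_n$, contradicting that $F$ is pseudo-Anosov there. The main obstacle is the second paragraph together with its promotion in the third: proving that the $C_n$--projection grows genuinely linearly (controlling how the bounded map $P$ and the boundary-crossing twist $s_1^{-1}$ act on the projection), and verifying the no-backtracking estimate that converts the one-step projection bound into a translation-length bound. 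The pseudo-Anosov ``eventually'' is the secondary, softer point, dispatched by the limiting argument just described.
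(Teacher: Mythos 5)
Your route is not the paper's main proof but is essentially the alternative argument the authors themselves sketch in \refrem{CLMProof}: linearly growing subsurface projections for $b_n$, the Masur--Minsky distance formula, and Brock's volume--pants-distance comparison. The paper's actual proof of \refclm{Hyp} is geometric rather than combinatorial. It drills the curves $c_n$ and $d_n$ out of $M_n$, observes that the drilled manifolds $N_n$ are all obtained from one fixed pared manifold $(P,\rho)$ by regluing the one-holed tori $C^\pm$ and $D^\pm$ via the $n^{\rm th}$ power of the pseudo-Anosov $F$, applies Brock--Minsky--Namazi--Souto (bounded combinatorics, growing height, bilipschitz models) to obtain hyperbolicity and linearly growing volume for $N_n$ together with long meridional slopes, and then invokes the Dehn filling theorem of Futer--Kalfagianni--Purcell to transfer hyperbolicity and volume growth back to $M_n$. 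Note in particular that the paper never proves $b_n$ is pseudo-Anosov: hyperbolicity of $M_n$ falls out of the filling theorem, whereas in your route establishing that $b_n$ is pseudo-Anosov is load-bearing.

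Within your chosen route there are two genuine gaps. First, the step ``$P$ distorts the $C_n$--projection by a bounded amount'' is false as stated: $P$ does not preserve the one-holed torus supporting $F$, so $\pi_{C_n}(Py)$ is not uniformly close to $\pi_{C_n}(y)$ over all curves $y$ (it can even be empty), and even maps preserving $C_n$, such as the factor $s_2$, move projections by an amount that grows with $d_{C_n}(y,\alpha_2)$. What is true is that the large coefficient for the pair $(\mu, b_n\mu)$ appears at the \emph{translated} domain $P(C_n)$ --- since $P$ and $\mu$ are fixed independently of $n$, the correction $d_{P(C_n)}(\mu, P\mu)$ is a constant --- and, by equivariance, at the orbit $b_n^j\bigl(P(C_n)\bigr)$. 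Promoting these one-step estimates to $d_{\calP}(\mu, b_n^k\mu) \geq c\,kn - C$ then requires verifying that consecutive domains in this orbit overlap (are neither disjoint nor nested), an $n$-dependent condition your sketch never checks, before the Behrstock time-order argument applies; this is precisely the content of \cite[Theorem~5.2]{ClayLeiningerMangahas12}, which the paper cites in \refrem{CLMProof} rather than re-deriving (and which it applies to projections to both $C_n$ and $D_n$, tracking $F^n$ and $F^{-n}$ separately). Second, your ``eventually pseudo-Anosov'' argument is too soft: the reducing multicurves of $b_n$ vary with $n$ and need not converge in $\PML$, so ``a subsequential limit would force $F$ to fix a curve in $C_n$'' is not a proof. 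The rigorous statement is again the Clay--Leininger--Mangahas theorem: an invariant curve would have uniformly bounded projections to the orbit domains, contradicting the large coefficients. With these two holes plugged --- in effect, by citing CLM as the paper does --- your argument matches \refrem{CLMProof} and does prove the claim, including the capped-off version needed for \refthm{BigKnots}\refitm{Long}.
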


\begin{proof}
We fix a genus $g$ and reuse the notation above, 
taking $N = M_0 - (c_0 \cup d_0)$.
Since $M_0$ is hyperbolic (\refclm{HypZero}), since $c_0$ and $d_0$ lie in distinct fibres, and since $c_0$ and $d_0$ are not isotopic, we deduce that $N$ is hyperbolic by Thurston's hyperbolisation theorem~\cite[Theorem~2.3]{thurston:survey}.

Let $C, D \subset N$ be the images of $C_0$ and $D_0$ respectively.  
We cut the cusps off of $N$ and then cut along small open regular neighbourhoods of $C$ and $D$ to obtain a compact three-manifold $P$. 
Let $\rho \subset \bdy P$ be the remains of the cusp tori about $w_0$, $c_0$, and $d_0$. 
That is, $\rho$ consists of a torus, coming from $w_0$, and two \emph{paring annuli}, coming from $c_0$ and $d_0$.
The annular components of $\rho$ separate the genus two components of $\bdy P$ into one-holed tori $C^\pm$ and $D^\pm$ respectively.
These form the \emph{horizontal boundary} of $(P, \rho)$

Since $C$ and $D$ were contained in distinct fibres in $M_0$, the horizontal boundary of $(P, \rho)$ is incompressible in $(P, \rho)$.
Thus $(P, \rho)$ is a compact, oriented, irreducible, atoroidal three-manifold with incompressible horizontal boundary and with non-abelian fundamental group.
Since $c_0$ and $d_0$ are not homotopic in $M_0$, we deduce that $(P, \rho)$ is a \emph{pared manifold} as in~\cite[Section~2.4]{BrockMinskyNamaziSouto16}. 

For each horizontal boundary component $E$ of $(P, \rho)$ we now chose \emph{any} complete marking $\mu(E)$; 
see~\cite[Section~2.1]{BrockMinskyNamaziSouto16}.
Define $N_n = M_n - (c_n \cup d_n)$. 
Let $m(c_n)$ and $m(d_n)$ be the meridional slopes in the associated torus cusps of $N_n$. 
We note that $N_n$ is obtained from $(P, \rho)$ by gluing $C^+$ to $C^-$, and $D^-$ to $D^+$, using the $n^\mathrm{th}$ power of $F$, the monodromy of the figure-eight knot complement.
Since $F$ is pseudo-Anosov, we deduce that the gluings $N_n$ have \emph{$R$--bounded combinatorics} and increasing \emph{height} in the sense of~\cite[Section~2.12]{BrockMinskyNamaziSouto16}. 

We now apply a theorem of Brock, Minsky, Namazi, and Souto~\cite[Theorem 8.1]{BrockMinskyNamaziSouto16} to find that, for sufficiently large $n$, the gluing $N_n$ is hyperbolic and has a \emph{bilipschitz  model} $\MM_n$.
We deduce that the volumes of the manifolds $N_n$ tend to infinity coarsely linearly with $n$. 
Likewise, the lengths of the meridional slopes $m(c_n)$ and $m(d_n)$ tend to infinity. 
Applying a Dehn surgery result~\cite[Theorem~1.1]{fkp:volume}, we find that the manifolds $M_n$ are hyperbolic, and have volume tending to infinity coarsely linearly with $n$. 
This proves the claim.
\end{proof}

The proof of \refthm{BigKnots}\refitm{Long} is similar.  
Fix the genus $g$.
By \refclm{HypZero}, the longitudinal filling $M'$ of $M_0$ is hyperbolic. 
Taking $N'$ to be the longitudinal filling of $N$ along $w_0$, we deduce from the above that $N'$ is also hyperbolic.
We cut along the surfaces $C'$ and $D'$ (the images of $C_0$ and $D_0$) to obtain a pared manifold $(P', \rho')$.  
We now again apply the machinery of~\cite{BrockMinskyNamaziSouto16} and of~\cite{fkp:volume}.
This completes the proof of \refthm{BigKnots}.
\end{proof}

\begin{remark}\label{Rem:CLMProof}
There is another, related, proof of parts \refitm{Hyp} and \refitm{Long} of \refthm{BigKnots}.
One uses the work of Clay, Leininger, and Mangahas~\cite[Theorem~5.2]{ClayLeiningerMangahas12} to show that the monodromies $b_n$ have linearly growing subsurface projections to $C_n$ and $D_n$. 
Pairing this with the Masur-Minsky distance formula~\cite[Section~8]{masur-minsky:hierarchies} and results of Brock~\cite[Theorems~1.1 and~2.1]{brock:fibered}, we find that $M_n$ has linearly growing volume. 
\end{remark}


\begin{remark}[An enhancement of \refthm{BigKnots}]
Our knots $w_n$, produced via a branched double covering construction, are morally similar to a family of hyperbolic knots produced by Misev~\cite[Section~3]{Misev21}, using a plumbing construction.
However, in any fixed genus Misev's family has bounded hyperbolic volume.
Nevertheless, with a bit more work one may adapt the construction of \refthm{BigKnots} so that the family of knots has volume going to infinity while satisfying Misev's conclusions. In particular, for each $g \geq 2$, there is a family of knots $w_n$ with the following properties.
\begin{enumerate}
\item 
\label{Itm:NewFibred}
$S^3 - w_n$ is fibred over the circle, with fibre of genus $g$. 
\item 
\label{Itm:NewVolume}
$S^3 - w_n$ is hyperbolic, with volume at least $V$.
\item 
\label{Itm:NewLong}
Longitudinal surgery on $w_n$ is hyperbolic, with volume at least $V$.
\item 
\label{Itm:Quasipositive} 
$w_n$ is strongly quasipositive.
\item 
\label{Itm:Seifert} 
$w_n$ has the same Seifert form as the torus knot $T(2g + 1, 2)$.
\end{enumerate}

Following Rudolph~\cite{Rudolph98, Rudolph:PositiveLinks}, we call a knot $K$ \emph{strongly quasipositive} if it is the closure of a braid that lies in the monoid generated by all elements of the following form.
\[
(\sigma_j \sigma_{j-1} \dots \sigma_{i+1}) \sigma_i (\sigma_j \sigma_{j-1} \dots \sigma_{i+1})^{-1}
\]
By work of Livingston~\cite{Livingston:Computations}, such knots $K$ have $\tau(K) = g_4(K) = g_3(K)$: here $g_4$ is the four-ball genus, $g_3$ is the Seifert genus, and $\tau$ is the Ozsv\'ath--Szab\'o concordance invariant.
By a result of Hedden~\cite[Proposition 2.1]{Hedden:NotionsOfPositivity}, if $K$ is strongly quasipositive and fibred then it serves as the binding of an open book decomposition that supports the tight contact structure on $S^3$.
If $K$ has the same Seifert form as $T = T(2g + 1, 2)$ then $K$ has the same Alexander module and signature function as $T$.
We refer to Misev~\cite{Misev21} for further discussion and implications of these properties. 

To construct the sequence $w_n$ having properties \refitm{NewFibred}--\refitm{Seifert}, we modify the definitions of the braids $\Phi$, $\Pi$, and $\beta_n$ appearing in the proof of Theorem~\ref{Thm:BigKnots}.
We first require $\Phi$ to factor as a
product of (conjugates of) squares of full twists on sets of odd numbers of strands. 
We also require $\Phi$ to be a pseudo-Anosov element on the last $2g$ strands.
For instance, when $g = 2$, one can take
\begin{align*}
\Phi    &= \left( \sigma_3 \sigma_4^{2} \cdot (\sigma_2 \sigma_3)^6 \cdot  \sigma_4^{-2} \sigma_3^{-1}\right) \cdot 
           \left( \sigma_3^{-1} \sigma_4^{-2} \cdot (\sigma_2 \sigma_3)^6 \cdot  \sigma_4^{2} \sigma_3\right). \\
\intertext{We define $\Pi$ and $\beta_n$ as follows.}
\Pi     &= \sigma_{2g} \cdot \sigma_{2g - 1} \cdot \sigma_{2g - 2} \cdot \sigma_{2g - 3} 
           \cdot \cdot \cdot \sigma_4 \cdot \sigma_3 \cdot \sigma_{2} \\ 
\beta_n &= \Pi \cdot \Phi^n \cdot \sigma_1 \cdot \Phi^{-n} 
\end{align*}
Property~\refitm{NewFibred} now holds, for the same reason as above. 


The choices of $\Pi$ and $\beta_n$ ensure that the fibre of $w_n$ is a plumbing of a positive Hopf band onto the fibre of the torus link $T(2,2g)$.
The strong quasipositivity of $w_n$ now follows from the strong quasipositivity of $T(2,2g)$ by~\cite[Proposition~4.2]{Rudolph98}, yielding property \refitm{Quasipositive}.

Since $\Phi$ factors as product of squares of full twists (each about an odd number of strands), its lift $F$ factors as a product of Dehn twists along null-homologous curves. 
Since we are twisting along null-homologous curves, the Seifert form of $w_n$ is independent of $n$.
This yields property \refitm{Seifert}, as $w_0$ is the torus knot $T(2,2g+1)$.

Observe that $\beta_n$ factors as follows.
\[
\beta_n = (\Pi\, \sigma_1) \cdot ( \sigma_1^{-1} \Phi^n \sigma_1 \cdot \Phi^{-n} )
\]
The first term in parentheses is periodic; 
in fact, $(\Pi\, \sigma_1)^{2g+1}$ is a Dehn twist about the boundary curve $\omega_n$ 
(and thus trivial in the mapping class group). 
We now claim that $\beta_n$ is pseudo-Anosov. 
It suffices to prove that $\beta_n^{2g + 1}$ is pseudo-Anosov.
To see this, note that we can pull all copies of the first term $\Pi \sigma_1$ to the front by conjugating the copies of the second term, $\sigma_1^{-1} \Phi^n \sigma_1 \cdot \Phi^{-n}$.
Thus $\beta_n^{2g+1}$ is a Dehn twist about $\omega_n$, composed with a product of conjugates of the second term.
These are themselves a product of one conjugate of $\Phi^n$ and one conjugate of $\Phi^{-n}$.
Each conjugate is supported in a sub-disk containing $2g$ punctures, so every pair of supporting domains is neither disjoint nor nested.
Furthermore, each conjugate has large translation distance acting on the curve complex of its supporting domain. 
Thus~\cite[Theorem~6.1]{ClayLeiningerMangahas12} implies that $\beta_n^{2g+1}$ is pseudo-Anosov when $n$ is large.

Since $\beta_n$ is pseudo-Anosov, so is its lift $b_n$, hence $M_n$ is hyperbolic.
Since $\Phi$ is pseudo-Anosov on a sub-disk containing the last $2g$ strands, it follows that its lift $F$ is pseudo-Anosov on the branched double cover, a copy of $S_{g-1,2}$.
As in the previous paragraph, $b_n^{2g+1}$ factors as a product of conjugates of large powers of $F$ or $F^{-1}$.
Thus~\cite[Theorem~5.2]{ClayLeiningerMangahas12} implies that $b_n^{2g+1}$ has linearly growing translation distances in curve complexes of the corresponding supporting domains. 
Properties~\refitm{NewVolume} and~\refitm{NewLong} now follow exactly as in \refrem{CLMProof}.
\end{remark}

\begin{remark}[Other work]
\label{Rem:Other}
Baker~\cite[Theorem~4.1]{Baker08} finds among the Berge knots a sub-collection which have unbounded volume.  
However, as observed by Goda and Teragaito~\cite[page~502]{GodaTeragaito00}, all Berge knots are closures of positive (or negative) braids, hence there are only finitely many Berge knots of any given genus. Positivity also implies that the Berge knots are fibred.

Hirose, Kalfagianni, and Kin~\cite[Theorem~2]{HiroseKalfagianniKin22} give a construction of branched double covers (of any fixed closed, connected, oriented three-manifold $M$) that are fibred of genus $g \gg 0$ and hyperbolic with volume tending to infinity with $g$.  
They ask~\cite[Question~4]{HiroseKalfagianniKin22} whether, for every $M$, there is such a sequence with genus fixed and volume unbounded.

Our work gives a positive answer for $M = S^2 \cross S^1$, as follows. 
By \refclm{Unknot}, the longitudinal filling of $\omega_n \subset S^3$ is $M = S^2 \cross S^1$.
Let $\omega'_n \subset M$ be the core of the filling solid torus.
Then the longitudinal fillings of the knots $w_n$, as in \refthm{BigKnots}\refitm{Long}, are branched double covers of $M$ with branch locus $\hat{\beta}_n \cup \omega'_n$.
Finally, by using our \refthm{BigKnots}\refitm{Knot}\refitm{Hyp}, Hirose, Kalfagianni, and Kin give a positive answer to their Question~4 with  $M = S^3$ and genus even~\cite[Corollary~11]{HiroseKalfagianniKin22}.

Very recently, Oakley proved a version of our \refthm{BigKnots} for knots in arbitrary closed three-manifolds~\cite{Oakley}. 
Combined with~\cite[Theorem~10]{HiroseKalfagianniKin22}, this gives a complete positive answer to~\cite[Question~4]{HiroseKalfagianniKin22}.
\end{remark}

\subsection*{Acknowledgements} 
We thank Alan Reid for posing the problem that led to this note. 
We thank David Gabai for suggesting that we consider Murasugi sums.
We thank Sam Taylor for helpful conversations regarding \refclm{Hyp}.
We thank Effie Kalfagianni and Eiko Kin for explaining their work with Susumu Hirose~\cite{HiroseKalfagianniKin22}.

Baker was partially supported by Simons Foundation grants 523883 and 962034.
Futer was partially supported by NSF grant DMS--1907708.
Purcell was partially supported by Australian Research Council grant DP210103136.

\renewcommand{\UrlFont}{\tiny\ttfamily}
\renewcommand\hrefdefaultfont{\tiny\ttfamily}
\bibliographystyle{plainurl}
\bibliography{biblio}

\end{document}